\newtheorem{theorem}{Theorem}
\newtheorem{proposition}[theorem]{Proposition}
\newtheorem{corollary}[theorem]{Corollary}
\theoremstyle{definition}
\newtheorem{example}[theorem]{Example}
\definecolor{webgreen}{rgb}{0,.5,0}
\definecolor{webbrown}{rgb}{.6,0,0}
\begin{document}

\begin{center}
\vskip 1cm{\LARGE\bf Riordan arrays and the $LDU$ decomposition of symmetric Toeplitz plus Hankel matrices} \vskip 1cm \large
Paul Barry\\
School of Science\\
Aoife Hennessy\\
Department of Computing, Mathematics and Physics\\
Waterford Institute of Technology, Cork Road, Waterford, Ireland\\

\href{mailto:pbarry@wit.ie}{\tt pbarry@wit.ie}
\end{center}
\vskip .2 in

\begin{abstract} We examine a result of Basor and Ehrhardt concerning Hankel and Toeplitz plus Hankel matrices, within the context of the Riordan group of lower-triangular matrices. This allows us to determine the $LDU$ decomposition of certain symmetric Toeplitz plus Hankel matrices. We also determine the generating functions and Hankel transforms of associated sequences.
\end{abstract}

Keywords: Toeplitz-plus-Hankel,Riordan array,LDU decomposition.
15A30,15A15,40C05

\section{Introduction}
In \cite{Basor} Basor and Ehrhardt studied the transformation
\begin{equation}
b_n=\sum_{k=0}^{n-1}\binom{n-1}{k}(a_{1-n+2k}+a_{2-n+2k}),\end{equation} defined for
sequences $\{a_n\}_{n=-\infty}^{\infty}$ in the context of relating the determinants of certain
Toeplitz plus Hankel matrices to the determinants of related Hankel matrices.

In this note, we shall study an equivalent transformation, which we will construct with the aid of Riordan arrays \cite{SGWW}. We call this the $\mathbb{B}$-transform. We shall then use our results to examine the $LDU$ decomposition of the resulting Toeplitz plus Hankel matrices.

In the next section, we shall detail the notations that will be used in this note, and give a basic introduction to the relevant theory of Riordan arrays. We shall follow this with a section which defines the $\mathbb{B}$-transform, studies some of its properties, and shows its equivalence the Basor and Ehrhardt transform. In particular, we derive an expression for the generating function of the image sequence, which for instance allows us to determine the Hankel transform of image sequence in many cases. A final section then looks at the $LDU$ decomposition of the related Toeplitz plus Hankel matrices, with examples involving Riordan arrays.
\section{Notation and basic Riordan array theory}
Although many of our results will be valid for sequences $a_n$ with values in $\mathbb{C}$, we shall in the sequel assume that
the sequences we deal with are integer sequences, $a_n \in \mathbb{Z}$.
For an integer sequence $a_n$, that is, an element of $\mathbb{Z}^\mathbb{N}$, the power series
$f(x)=\sum_{k=0}^{\infty}a_k x^k$ is called the \emph{ordinary generating function} or g.f. of the sequence.
$a_n$ is thus the coefficient of $x^n$ in this series. We denote this by
$a_n=[x^n]f(x)$. For instance, $F_n=[x^n]\frac{x}{1-x-x^2}$ is the $n$-th Fibonacci number, while
$C_n=[x^n]\frac{1-\sqrt{1-4x}}{2x}$ is the $n$-th Catalan number. We use the notation
$0^n=[x^n]1$ for the sequence $1,0,0,0,\ldots$ Thus $0^n=[n=0]=\delta_{n,0}=\binom{0}{n}$. Here,
we have used the Iverson bracket notation \cite{Concrete},
defined by $[\mathcal{P}]=1$ if the proposition $\mathcal{P}$
is true, and
$[\mathcal{P}]=0$ if $\mathcal{P}$ is false.

For a power series
$f(x)=\sum_{n=0}^{\infty}a_n x^n$ with $f(0)=0$ we define the reversion or compositional inverse of $f$ to be the
power series $\bar{f}(x)$ such that $f(\bar{f}(x))=x$. We sometimes write
$\bar{f}= \text{Rev}f$.

The Hankel transform \cite{Layman} of a sequence $a_n$ is the sequence $h_n=|a_{i+j}|_{i,j=0}^n$. If the sequence $a_n$ has a g.f. that has a continued fraction expansion of the form
$$\cfrac{a_0}{1-\alpha_0 x -
\cfrac{\beta_1 x^2}{1-\alpha_1 x-
\cfrac{\beta_2 x^2}{1-\alpha_2 x-
\cfrac{\beta_3 x^2}{1-\cdots}}}},$$ then the Hankel transform of $a_n$ is given by \cite{Kratt}
\begin{equation}h_n = a_0^{n+1} \beta_1^n\beta_2^{n-1}\cdots \beta_{n-1}^2\beta_n=a_0^{n+1}\prod_{k=1}^n
\beta_k^{n+1-k}.\end{equation} \noindent The $LDU$ decomposition of Hankel matrices has been studied in \cite{Triple, P_W}.

Some of the lower-triangular matrices that we shall meet will be coefficient arrays of families of orthogonal polynomials. General references for orthogonal polynomials include \cite{Chihara, Gautschi, Szego}.

$A^t$ will denote the transpose of the matrix $A$, and we will on occasion use $A\cdot B$ to denote the product of the matrices $AB$, where this makes reading the text easier. This also conforms with the use of ``$\cdot$'' for the product in the Riordan group (see below).

The \emph{Riordan group} \cite{SGWW, Spru}, is a set of
infinite lower-triangular integer matrices, where each matrix is
defined by a pair of generating functions
$g(x)=1+g_1x+g_2x^2+\cdots$ and $f(x)=f_1x+f_2x^2+\cdots$ where
$f_1\ne 0$ \cite{Spru}. We assume in addition that $f_1=1$ in what follows. The associated matrix is the matrix whose
$i$-th column is generated by $g(x)f(x)^i$ (the first column being
indexed by 0). The matrix corresponding to the pair $g, f$ is
denoted by $(g, f)$ or $\cal{R}$$(g,f)$. The group law is then given
by
\begin{displaymath} (g, f)\cdot(h, l)=(g, f)(h, l)=(g(h\circ f), l\circ
f).\end{displaymath} The identity for this law is $I=(1,x)$ and the
inverse of $(g, f)$ is $(g, f)^{-1}=(1/(g\circ \bar{f}), \bar{f})$
where $\bar{f}$ is the compositional inverse of $f$.

A Riordan array of the form $(g(x),x)$, where $g(x)$ is the
generating function of the sequence $a_n$, is called the
\emph{sequence array} of the sequence $a_n$. Its $(n,k)$-th term is
$a_{n-k}$. Such arrays are also called \emph{Appell} arrays as they form the elements of the
Appell subgroup.
\newline\newline If $\mathbf{M}$ is the matrix $(g,f)$, and
$\mathbf{a}=(a_0,a_1,\ldots)'$ is an integer sequence with ordinary
generating function $\cal{A}$ $(x)$, then the sequence
$\mathbf{M}\mathbf{a}$ has \cite{SGWW} ordinary generating function
$g(x)$$\cal{A}$$(f(x))$. This result is often called ``the Fundamental Theorem of Riordan arrays''. The (infinite) matrix $(g,f)$ can thus be considered to act on the ring of
integer sequences $\mathbb{Z}^\mathbb{N}$ by multiplication, where a sequence is regarded as a
(infinite) column vector. We can extend this action to the ring of power series
$\mathbb{Z}[[x]]$ by
$$(g,f):\cal{A}(\mathnormal{x}) \mapsto \mathnormal{(g,f)}\cdot
\cal{A}\mathnormal{(x)=g(x)}\cal{A}\mathnormal{(f(x))}.$$
\begin{example} The so-called \emph{binomial matrix} $\mathbf{B}$ is the element
$(\frac{1}{1-x},\frac{x}{1-x})$ of the Riordan group. It has general
element $\binom{n}{k}$, and hence as an array coincides with Pascal's triangle. More generally, $\mathbf{B}^m$ is the
element $(\frac{1}{1-m x},\frac{x}{1-mx})$ of the Riordan group,
with general term $\binom{n}{k}m^{n-k}$. It is easy to show that the
inverse $\mathbf{B}^{-m}$ of $\mathbf{B}^m$ is given by
$(\frac{1}{1+mx},\frac{x}{1+mx})$.
\end{example}
\noindent For a sequence $a_0, a_1, a_2, \ldots$ with g.f. $g(x)$, the ``aeration'' of the sequence is the sequence
$a_0, 0, a_1, 0, a_2, \ldots$ with interpolated zeros. Its g.f. is $g(x^2)$. We note that since $c(x)=\frac{1-\sqrt{1-4x}}{2x}$ has
the well-known continued fraction expansion
$$c(x)=\cfrac{1}{1-\cfrac{x}{1-\cfrac{x}{1-\ldots}}},$$ $c(x^2)$ has the continued fraction expansion
\begin{equation}\label{ACat} c(x^2)=\cfrac{1}{1-\cfrac{x^2}{1-\cfrac{x^2}{1-\ldots}}}.\end{equation}

The aeration of a (lower-triangular) matrix $\mathbf{M}$ with general term $m_{i,j}$ is the matrix whose general term is given by
$$m^r_{\frac{i+j}{2},\frac{i-j}{2}}\frac{1+(-1)^{i-j}}{2},$$ where
$m^r_{i,j}$ is the $i,j$-th element of the reversal of $\mathbf{M}$:
$$m^r_{i,j}=m_{i,i-j}.$$
In the case of a Riordan array (or indeed any lower triangular array), the row sums of the aeration are equal to the diagonal sums of
the reversal of the original matrix.
\begin{example}
The Riordan array $(c(x^2), xc(x^2))$ is the aeration of the Riordan array $$(c(x),xc(x))=(1-x,x(1-x))^{-1}.$$ Here
$$c(x)=\frac{1-\sqrt{1-4x}}{2x}$$ is the g.f. of the Catalan numbers.
Indeed, the reversal of $(c(x), xc(x))$ is the matrix with general element
$$[k\le n+1] \binom{n+k}{k}\frac{n-k+1}{n+1},$$ which begins

\begin{displaymath}\left(\begin{array}{ccccccc} 1 & 0 &
0
& 0 & 0 & 0 & \ldots \\1 & 1 & 0 & 0 & 0 & 0 & \ldots \\ 1 & 2
& 2 & 0 & 0 &
0 & \ldots \\ 1 & 3 & 5 & 5 & 0 & 0 & \ldots \\ 1 & 4 & 9
& 14 & 14 & 0 & \ldots \\1 & 5 & 14 & 28 & 42 & 42
&\ldots\\
\vdots &
\vdots & \vdots & \vdots & \vdots & \vdots &
\ddots\end{array}\right).\end{displaymath}
\noindent Then $(c(x^2),xc(x^2))$ has general element
$$\binom{n+1}{\frac{n-k}{2}}\frac{k+1}{n+1}\frac{1+(-1)^{n-k}}{2},$$ and begins
\begin{displaymath}\left(\begin{array}{ccccccc} 1 & 0 &
0
& 0 & 0 & 0 & \ldots \\0 & 1 & 0 & 0 & 0 & 0 & \ldots \\ 1 & 0
& 1 & 0 & 0 &
0 & \ldots \\ 0 & 2 & 0 & 1 & 0 & 0 & \ldots \\ 2 & 0 & 3
& 0 & 1 & 0 & \ldots \\0 & 5 & 0 & 4 & 0 & 1
&\ldots\\
\vdots &
\vdots & \vdots & \vdots & \vdots & \vdots &
\ddots\end{array}\right).\end{displaymath}
\noindent We have
$$(c(x^2),xc(x^2))=\left(\frac{1}{1+x^2},\frac{x}{1+x^2}\right)^{-1}.$$
\end{example}
\section{The $\mathbb{B}$-transform}
We let
$$L=\left(\frac{1-x}{1+x^2}, \frac{x}{1+x^2}\right)^{-1},$$ where we recall that the Riordan matrix
$(g,f)$ is the lower triangular matrix whose $k$-th column has generating function $g(x)f(x)^k$, for suitable
$g$ and $f$.
Then $L$ has $(n,k)$-th term
$$\binom{n}{\lfloor \frac{n-k}{2} \rfloor},$$ and $L^{-1}=\left(\frac{1-x}{1+x^2}, \frac{x}{1+x^2}\right)$ is the
coefficient array of the generalized Chebyshev polynomials defined by
$$P_n(x)=xP_{n-1}(x)-P_{n-2}(x),\quad P_0(x)=1,\quad P_1(x)=x-1.$$
The matrix $L$ begins
\begin{displaymath}L=\left(\begin{array}{ccccccc} 1 & 0 & 0
& 0 & 0 & 0 & \ldots \\1 & 1 & 0 & 0 & 0 & 0 & \ldots \\
2 & 1 & 1 & 0 & 0 & 0 & \ldots \\ 3 & 3 & 1 & 1
& 0 & 0 & \ldots \\ 6 & 4 & 4 & 1 & 1 & 0 & \ldots
\\10 & 10 & 5 & 5 & 1 & 1 &\ldots\\ \vdots & \vdots & \vdots & \vdots & \vdots
& \vdots & \ddots\end{array}\right).\end{displaymath}
We now define $\mathbb{B}$ to be the matrix
\begin{equation} \mathbb{B}=L \cdot (1+x,x)^t.\end{equation}
This matrix begins
\begin{displaymath}\mathbb{B}=\left(\begin{array}{ccccccc} 1 & 1 & 0
& 0 & 0 & 0 & \ldots \\1 & 2 & 1 & 0 & 0 & 0 & \ldots \\
2 & 3 & 2 & 1 & 0 & 0 & \ldots \\ 3 & 6 & 4 & 2
& 1 & 0 & \ldots \\ 6 & 10 & 8 & 5 & 2 & 1 & \ldots
\\10 & 20 & 15 & 10 & 6 & 2 &\ldots\\ \vdots & \vdots & \vdots & \vdots & \vdots
& \vdots & \ddots\end{array}\right).\end{displaymath}
Since the matrix $(1+x,x)^t$ is given by
\begin{eqnarray*}(1+x,x)^t&=&
\left(\begin{array}{ccccccc} 1 & 1 & 0
& 0 & 0 & 0 & \ldots \\0 & 1 & 1 & 0 & 0 & 0 & \ldots \\
0 & 0 & 1 & 1 & 0 & 0 & \ldots \\ 0 & 0 & 0 & 1
& 1 & 0 & \ldots \\ 0 & 0 & 0 & 0 & 1 & 1 & \ldots
\\0 & 0 & 0 & 0 & 0 & 1 &\ldots\\ \vdots & \vdots & \vdots & \vdots & \vdots
& \vdots & \ddots\end{array}\right)\\
&=&\left(\begin{array}{ccccccc} 1 & 0 & 0
& 0 & 0 & 0 & \ldots \\0 & 1 & 0 & 0 & 0 & 0 & \ldots \\
0 & 0 & 1 & 0 & 0 & 0 & \ldots \\ 0 & 0 & 0 & 1
& 0 & 0 & \ldots \\ 0 & 0 & 0 & 0 & 1 & 0 & \ldots
\\0 & 0 & 0 & 0 & 0 & 1 &\ldots\\ \vdots & \vdots & \vdots & \vdots & \vdots
& \vdots & \ddots\end{array}\right)+
\left(\begin{array}{ccccccc} 0 & 1 & 0
& 0 & 0 & 0 & \ldots \\0 & 0 & 1 & 0 & 0 & 0 & \ldots \\
0 & 0 & 0 & 1 & 0 & 0 & \ldots \\ 0 & 0 & 0 & 0
& 1 & 0 & \ldots \\ 0 & 0 & 0 & 0 & 0 & 1 & \ldots
\\0 & 0 & 0 & 0 & 0 & 0 &\ldots\\ \vdots & \vdots & \vdots & \vdots & \vdots
& \vdots & \ddots\end{array}\right),\end{eqnarray*} we see that the $(n,k)$-th element of $\mathbb{B}$ is given by
$$b_{n,k}=\binom{n}{\lfloor \frac{n-k}{2} \rfloor}+\binom{n}{\lfloor \frac{n-k+1}{2} \rfloor}-\binom{n}{\lfloor \frac{n}{2} \rfloor}\cdot 0^k.$$
Now let $\{a_n\}_{n \ge 0}$ be a sequence. We define the $\mathbb{B}$ \emph{transform} of $a_n$ to be the sequence $\{b_n\}_{n\ge 0}$ given by
\begin{equation} b_n=\sum_{k=0}^{n+1} b_{n,k} a_k\end{equation} where
$\mathbb{B}=(b_{n,k})_{n,k \ge 0}$.
\begin{example} The $\mathbb{B}$-transform of the Fibonacci numbers is given by
$$b_n=\sum_{k=0}^{n+1} b_{n,k} F_k=\sum_{k=0}^{n+1}(\binom{n}{\lfloor \frac{n-k}{2} \rfloor}+\binom{n}{\lfloor \frac{n-k+1}{2} \rfloor}-\binom{n}{\lfloor \frac{n}{2} \rfloor}\cdot 0^k)F_k.$$ This sequence starts
$$1, 3, 7, 17, 39, 91, 207, 475, 1075, 2445, 5515,\ldots.$$
It has the interesting property that its Hankel transform is $(-2)^n$.
\end{example}
\begin{proposition} We have
\begin{equation} \mathbb{B}=\left(\frac{1}{1+x^2},\frac{x}{1+x^2}\right)^{-1}\cdot \mathbb{T},\end{equation} where
$\mathbb{T}$ is the matrix
\begin{equation} \mathbb{T}=\left(\frac{1}{1-x},x\right)\cdot (1+x,x)^t.\end{equation}
\end{proposition}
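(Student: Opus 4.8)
The plan is to reduce the statement to a single identity about genuine Riordan arrays, treating the non-Riordan factor $(1+x,x)^t$ as an inert right-hand multiplicand. By definition $\mathbb{B}=L\cdot(1+x,x)^t$ with $L=\left(\frac{1-x}{1+x^2},\frac{x}{1+x^2}\right)^{-1}$, and by definition $\mathbb{T}=\left(\frac{1}{1-x},x\right)\cdot(1+x,x)^t$. Since matrix multiplication is associative, it suffices to establish the matrix identity
\[
L=\left(\frac{1}{1+x^2},\frac{x}{1+x^2}\right)^{-1}\cdot\left(\frac{1}{1-x},x\right),
\]
because then
\[
\mathbb{B}=L\cdot(1+x,x)^t=\left(\frac{1}{1+x^2},\frac{x}{1+x^2}\right)^{-1}\cdot\left(\frac{1}{1-x},x\right)\cdot(1+x,x)^t=\left(\frac{1}{1+x^2},\frac{x}{1+x^2}\right)^{-1}\cdot\mathbb{T}.
\]

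Next I would prove the displayed identity for $L$ by inverting it. It is equivalent to
\[
\left(\frac{1-x}{1+x^2},\frac{x}{1+x^2}\right)=\left(\frac{1}{1-x},x\right)^{-1}\cdot\left(\frac{1}{1+x^2},\frac{x}{1+x^2}\right)=(1-x,x)\cdot\left(\frac{1}{1+x^2},\frac{x}{1+x^2}\right),
\]
where I used that the inverse of the sequence array $\left(\frac{1}{1-x},x\right)$ in the Riordan group is $(1-x,x)$ (apply the inversion formula with $f(x)=x$). The right-hand side is a product of two Riordan arrays, so I can evaluate it with the group law $(g,f)\cdot(h,l)=(g\cdot(h\circ f),l\circ f)$; here the left factor has $f(x)=x$, so composition with $f$ is the identity and the product collapses to $\left((1-x)\cdot\frac{1}{1+x^2},\frac{x}{1+x^2}\right)=\left(\frac{1-x}{1+x^2},\frac{x}{1+x^2}\right)=L^{-1}$. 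Equivalently, one may note directly that left multiplication by a sequence array $(u(x),x)$ multiplies the generating function of each column by $u(x)$, and $(1-x)\cdot\frac{1}{1+x^2}\cdot\left(\frac{x}{1+x^2}\right)^{k}$ is indeed the $k$-th column generating function of $L^{-1}$.

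Finally I would assemble the pieces: combining the two displays via associativity of the matrix product yields $\mathbb{B}=\left(\frac{1}{1+x^2},\frac{x}{1+x^2}\right)^{-1}\cdot\mathbb{T}$ with $\mathbb{T}=\left(\frac{1}{1-x},x\right)\cdot(1+x,x)^t$, as claimed. I expect the only real point of care to be bookkeeping rather than computation: $(1+x,x)^t$ is a transpose, not an element of the Riordan group, so it must be carried along on the right untouched and cannot be simplified through the group law, whereas every genuine manipulation above takes place among the Riordan factors $L$, $\left(\frac{1}{1+x^2},\frac{x}{1+x^2}\right)$ and $\left(\frac{1}{1-x},x\right)$, where the identities are routine. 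As a sanity check I would multiply out the first few rows of $\left(\frac{1}{1+x^2},\frac{x}{1+x^2}\right)^{-1}\cdot\mathbb{T}$ and compare with the displayed initial segment of $\mathbb{B}$.
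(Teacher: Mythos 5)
Your proof is correct and follows essentially the same route as the paper: both reduce the claim, via associativity and the factor $\left(\frac{1}{1-x},x\right)^{-1}\cdot\left(\frac{1}{1-x},x\right)$, to the Riordan-group identity $L\cdot\left(\frac{1}{1-x},x\right)^{-1}=\left(\frac{1}{1+x^2},\frac{x}{1+x^2}\right)^{-1}$, verified by passing to inverses and applying the group law. The only cosmetic difference is that you compute $(1-x,x)\cdot\left(\frac{1}{1+x^2},\frac{x}{1+x^2}\right)=L^{-1}$ while the paper computes the equivalent product $\left(\frac{1}{1-x},x\right)\cdot L^{-1}=\left(\frac{1}{1+x^2},\frac{x}{1+x^2}\right)$.
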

\begin{proof}
We have
$$\mathbb{B}=L\cdot (1+x,x)^t=L \cdot \left(\frac{1}{1-x},x\right)^{-1}\cdot \left(\frac{1}{1-x},x\right)\cdot (1+x,x)^t.$$
Now
\begin{eqnarray*}L\cdot \left(\frac{1}{1-x},x\right)^{-1}&=&
\left(\frac{1-x}{1+x^2},\frac{x}{1+x^2}\right)^{-1}\cdot \left(\frac{1}{1-x},x\right)^{-1}\\
&=&\left(\left(\frac{1}{1-x},x\right)\cdot \left(\frac{1-x}{1+x^2},\frac{x}{1+x^2}\right)\right)^{-1}\\
&=&\left(\frac{1}{1+x^2},\frac{x}{1+x^2}\right)^{-1}.\end{eqnarray*}
\end{proof}
\noindent We recall that the  matrix $\left(\frac{1}{1+x^2},\frac{x}{1+x^2}\right)^{-1}=(c(x^2),xc(x^2))$, where
$c(x)=\frac{1-\sqrt{1-4x}}{2x}$ is the g.f. of the Catalan numbers, has
general element
$$\binom{n+1}{\frac{n-k}{2}}\frac{k+1}{n+1}\frac{1+(-1)^{n-k}}{2}.$$
In addition,
$\left(\frac{1}{1-x},x\right)\cdot (1+x,x)^t$ is the matrix $\mathbb{T}$ given by
\begin{displaymath}\mathbb{T}=\left(\begin{array}{ccccccc} 1 & 1 & 0
& 0 & 0 & 0 & \ldots \\1 & 2 & 1 & 0 & 0 & 0 & \ldots \\
1 & 2 & 2 & 1 & 0 & 0 & \ldots \\ 1 & 2 & 2 & 2
& 1 & 0 & \ldots \\ 1 & 2 & 2 & 2 & 2 & 1 & \ldots
\\1 & 2 & 2 & 2 & 2 & 2 &\ldots\\ \vdots & \vdots & \vdots & \vdots & \vdots
& \vdots & \ddots\end{array}\right).\end{displaymath}
Now note that
\begin{displaymath}\mathbb{T}=\left(\begin{array}{ccccccc} 1 & 0 & 0
& 0 & 0 & 0 & \ldots \\1 & 1 & 0 & 0 & 0 & 0 & \ldots \\
1 & 1 & 1 & 0 & 0 & 0 & \ldots \\ 1 & 1 & 1 & 1
& 0 & 0 & \ldots \\ 1 & 1 & 1 & 1 & 1 & 0 & \ldots
\\1 & 1 & 1 & 1 & 1 & 1 &\ldots\\ \vdots & \vdots & \vdots & \vdots & \vdots
& \vdots & \ddots\end{array}\right)+\left(\begin{array}{ccccccc} 0 & 1 & 0
& 0 & 0 & 0 & \ldots \\0 & 1 & 1 & 0 & 0 & 0 & \ldots \\
0 & 1 & 1 & 1 & 0 & 0 & \ldots \\ 0 & 1 & 1 & 1
& 1 & 0 & \ldots \\ 0 & 1 & 1 & 1 & 1 & 1 & \ldots
\\0 & 1 & 1 & 1 & 1 & 1 &\ldots\\ \vdots & \vdots & \vdots & \vdots & \vdots
& \vdots & \ddots\end{array}\right), \end{displaymath} and hence the action of
$\mathbb{T}$ on a sequence $a_n$ is to return the sequence with $n$-th term equal to
$$\sum_{k=0}^n a_k + \sum_{k=1}^{n+1} a_k = 2 \sum_{k=0}^n a_k +a_{n+1}-a_0.$$
Thus we have
\begin{proposition}
We have
$$b_n=\sum_{k=0}^n \binom{n+1}{\frac{n-k}{2}}\frac{k+1}{n+1}\frac{1+(-1)^{n-k}}{2}(\sum_{j=0}^k a_j + \sum_{j=1}^{k+1} a_j),$$ or equivalently,
$$b_n=\sum_{k=0}^n \binom{n+1}{\frac{n-k}{2}}\frac{k+1}{n+1}\frac{1+(-1)^{n-k}}{2}(2\sum_{j=0}^k a_j + a_{k+1}-a_0).$$
\end{proposition}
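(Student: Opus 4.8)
The plan is to combine the two structural facts already in hand: the factorization $\mathbb{B}=\left(\frac{1}{1+x^2},\frac{x}{1+x^2}\right)^{-1}\cdot \mathbb{T}$ from the preceding Proposition, together with the two explicit descriptions recalled immediately after it — namely that $\left(\frac{1}{1+x^2},\frac{x}{1+x^2}\right)^{-1}=(c(x^2),xc(x^2))$ has $(n,k)$-th entry $\binom{n+1}{\frac{n-k}{2}}\frac{k+1}{n+1}\frac{1+(-1)^{n-k}}{2}$, and that $\mathbb{T}$ sends a sequence $a_n$ to the sequence whose $k$-th term is $\sum_{j=0}^k a_j+\sum_{j=1}^{k+1}a_j=2\sum_{j=0}^k a_j+a_{k+1}-a_0$. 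The proposition is then a bookkeeping consequence of associativity of matrix action.

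Concretely, I would first set $c_k := (\mathbb{T}\mathbf{a})_k = \sum_{j=0}^k a_j + \sum_{j=1}^{k+1} a_j$ using the displayed description of the action of $\mathbb{T}$. Since each of $(c(x^2),xc(x^2))$ and the two summand matrices comprising $\mathbb{T}$ is row-finite, the action on column vectors is associative, so $\mathbf{b}=\mathbb{B}\mathbf{a}=\bigl((c(x^2),xc(x^2))\cdot\mathbb{T}\bigr)\mathbf{a}=(c(x^2),xc(x^2))\cdot(\mathbb{T}\mathbf{a})=(c(x^2),xc(x^2))\cdot\mathbf{c}$. Expanding this last product entry-wise, using that $(c(x^2),xc(x^2))$ is lower triangular with the stated general term, gives
$$b_n=\sum_{k=0}^n \binom{n+1}{\frac{n-k}{2}}\frac{k+1}{n+1}\frac{1+(-1)^{n-k}}{2}\,c_k,$$
and substituting the two expressions for $c_k$ produces the two displayed formulas; their equivalence is just the identity $\sum_{j=0}^k a_j+\sum_{j=1}^{k+1}a_j=2\sum_{j=0}^k a_j+a_{k+1}-a_0$.

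There is no serious obstacle here, but the one point that warrants care is the non-lower-triangular factor: $\mathbb{T}$ carries a single nonzero superdiagonal, so $c_k$ genuinely involves $a_{k+1}$. The truncation of the outer sum at $k=n$ — rather than at $n+1$ as in the original definition $b_n=\sum_{k=0}^{n+1}b_{n,k}a_k$ — is legitimate precisely because $(c(x^2),xc(x^2))$ is lower triangular, so the dependence on $a_{n+1}$ is carried entirely by the $k=n$ term through the $a_{n+1}$ summand inside $c_n$. I would state this explicitly so the index ranges are unambiguous, and also note that $\binom{n+1}{\frac{n-k}{2}}$ is only evaluated when $n-k$ is even (otherwise the factor $\frac{1+(-1)^{n-k}}{2}$ vanishes), in which case $\frac{n-k}{2}\in\{0,1,\ldots,\lfloor n/2\rfloor\}$ is a genuine binomial index.
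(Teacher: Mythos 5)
Your proposal is correct and follows exactly the route the paper takes: the paper derives this proposition as an immediate consequence ("Thus we have") of the factorization $\mathbb{B}=\left(\frac{1}{1+x^2},\frac{x}{1+x^2}\right)^{-1}\cdot\mathbb{T}$, the stated general term of $(c(x^2),xc(x^2))$, and the displayed action of $\mathbb{T}$ on a sequence. Your added remarks on associativity, the truncation of the outer sum at $k=n$, and the vanishing factor when $n-k$ is odd make explicit the bookkeeping that the paper leaves tacit.
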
 In the following, we will be interested in determining the g.f. of the image of $a_n$. We have the following result.
\begin{proposition} Let $f(x)$ be the g.f. of $a_n$. Then $b_n=\sum_{k=0}^{n+1} b_{n,k}a_k$ has g.f. given by
\begin{equation}\left(\frac{1}{1+x^2},\frac{x}{1+x^2}\right)^{-1}\cdot \left(\frac{(1+x)f(x)-a_0}{x(1-x)}\right).\end{equation}
Equivalently, the g.f. of $b_n$ is given by
\begin{equation}\label{Equiv}\left(\frac{1-x}{1+x^2},\frac{x}{1+x^2}\right)^{-1}\cdot \left(\frac{(1+x)f(x)-a_0}{x}\right).\end{equation}
\end{proposition}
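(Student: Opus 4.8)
The plan is to build on the factorization $\mathbb{B}=\left(\frac{1}{1+x^2},\frac{x}{1+x^2}\right)^{-1}\cdot\mathbb{T}$ from the previous proposition, and to track the effect of each factor on the generating function $f(x)$ of $a_n$, working throughout in $\mathbb{Z}[[x]]$ with the extended Riordan action $(g,f):\mathcal{A}(x)\mapsto g(x)\mathcal{A}(f(x))$.

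First I would compute the g.f. of $\mathbb{T}\mathbf{a}$. We have already seen that this sequence has $n$-th term $2\sum_{j=0}^n a_j + a_{n+1}-a_0$. Since $\sum_{j=0}^n a_j$ has g.f. $f(x)/(1-x)$, the shifted sequence $a_{n+1}$ has g.f. $(f(x)-a_0)/x$, and the constant sequence $a_0$ has g.f. $a_0/(1-x)$, the g.f. of $\mathbb{T}\mathbf{a}$ is
$$\frac{2f(x)}{1-x}+\frac{f(x)-a_0}{x}-\frac{a_0}{1-x}.$$
Placing this over the common denominator $x(1-x)$, the numerator becomes $2xf(x)+(1-x)(f(x)-a_0)-a_0x$; expanding, the $+a_0x$ produced by $(1-x)(f(x)-a_0)$ cancels the $-a_0x$ from the last term, and the remaining terms collapse to $(1+x)f(x)-a_0$. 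Hence the g.f. of $\mathbb{T}\mathbf{a}$ is $\dfrac{(1+x)f(x)-a_0}{x(1-x)}$, which is a genuine element of $\mathbb{Z}[[x]]$ since $(1+x)f(x)-a_0$ has zero constant term.

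Applying $\left(\frac{1}{1+x^2},\frac{x}{1+x^2}\right)^{-1}$ to this power series, via the Fundamental Theorem of Riordan arrays in its power-series form, gives the first claimed expression for the g.f. of $b_n$ at once. For the equivalent form \eqref{Equiv}, I would recall from the computation inside the proof of the preceding proposition that $L\cdot\left(\frac{1}{1-x},x\right)^{-1}=\left(\frac{1}{1+x^2},\frac{x}{1+x^2}\right)^{-1}$, i.e. $\left(\frac{1}{1+x^2},\frac{x}{1+x^2}\right)^{-1}=L\cdot(1-x,x)$, together with $L=\left(\frac{1-x}{1+x^2},\frac{x}{1+x^2}\right)^{-1}$. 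Because the Appell array $(1-x,x)$ acts on a power series $h(x)$ by $h(x)\mapsto(1-x)h(x)$, it cancels the denominator factor $(1-x)$, so the g.f. of $b_n$ equals $L\cdot\left(\frac{(1+x)f(x)-a_0}{x}\right)=\left(\frac{1-x}{1+x^2},\frac{x}{1+x^2}\right)^{-1}\cdot\left(\frac{(1+x)f(x)-a_0}{x}\right)$, which is \eqref{Equiv}.

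The only real subtlety, rather than a genuine obstacle, is keeping the two roles of a Riordan array distinct: as an infinite matrix acting on coefficient vectors, and as the operator $\mathcal{A}(x)\mapsto g(x)\mathcal{A}(f(x))$ on generating functions. Once the latter viewpoint is used consistently, what remains is the short formal-power-series calculation above; one should also verify the elementary g.f. identities for partial sums and shifts and confirm that every intermediate expression lies in $\mathbb{Z}[[x]]$.
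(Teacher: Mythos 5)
Your proposal is correct and follows essentially the same route as the paper: compute the generating function of $\mathbb{T}\mathbf{a}$ (the paper does this via $\frac{f(x)}{1-x}+\frac{f(x)-a_0}{x(1-x)}$, you via the equivalent form $2\sum_{j=0}^n a_j+a_{n+1}-a_0$, both yielding $\frac{(1+x)f(x)-a_0}{x(1-x)}$) and then apply the factorization $\mathbb{B}=\left(\frac{1}{1+x^2},\frac{x}{1+x^2}\right)^{-1}\cdot\mathbb{T}$ with the Fundamental Theorem of Riordan arrays. Your explicit justification of the second form via $\left(\frac{1}{1+x^2},\frac{x}{1+x^2}\right)^{-1}=L\cdot(1-x,x)$ is a correct detail that the paper leaves implicit.
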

\begin{proof}
The result follows from the fact that the generating function of $\sum_{k=0}^n a_k+\sum_{k=1}^{n+1} a_k$ is given by
$$\frac{1}{1-x} f(x)+\frac{1}{1-x}\left(\frac{f(x)-a_0}{x}\right)=\frac{(1+x)f(x)-a_0}{x(1-x)}.$$
\end{proof}
\begin{corollary} The g.f. of $b_n=\sum_{k=0}^n b_{n,k} a_k$ is given by
$$\left(\frac{(1+xc(x^2))f(xc(x^2))-a_0}{x(1-xc(x^2))}\right).$$
\end{corollary}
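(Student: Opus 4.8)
The plan is to obtain this closed form by carrying out, in the preceding Proposition, the action of the Riordan-array factor $\left(\frac{1}{1+x^2},\frac{x}{1+x^2}\right)^{-1}$ explicitly, via a single application of the Fundamental Theorem of Riordan arrays.

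First I would recall from the preceding Proposition that the generating function in question equals
$$\left(\frac{1}{1+x^2},\frac{x}{1+x^2}\right)^{-1}\cdot\left(\frac{(1+x)f(x)-a_0}{x(1-x)}\right),$$
and, as noted just above, $\left(\frac{1}{1+x^2},\frac{x}{1+x^2}\right)^{-1}=(c(x^2),xc(x^2))$ with $c(x)=\frac{1-\sqrt{1-4x}}{2x}$. Then I would apply the Fundamental Theorem of Riordan arrays in the form $(g,h)\cdot\mathcal{A}(x)=g(x)\,\mathcal{A}(h(x))$, with $g(x)=c(x^2)$, $h(x)=xc(x^2)$, and $\mathcal{A}(x)=\frac{(1+x)f(x)-a_0}{x(1-x)}$. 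This substitution yields
$$c(x^2)\cdot\frac{\big(1+xc(x^2)\big)\,f\!\big(xc(x^2)\big)-a_0}{xc(x^2)\,\big(1-xc(x^2)\big)}.$$

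Finally I would cancel the common factor $c(x^2)$ occurring in the leading coefficient and in the term $xc(x^2)$ of the denominator, which leaves
$$\frac{\big(1+xc(x^2)\big)\,f\!\big(xc(x^2)\big)-a_0}{x\,\big(1-xc(x^2)\big)},$$
the asserted expression. I do not anticipate a real obstacle here: the argument is one substitution followed by an elementary cancellation. The one point that requires care is the substitution itself — every occurrence of $x$ in $\mathcal{A}$, including the lone $x$ in its denominator, must be replaced by $xc(x^2)$, and it is exactly the $xc(x^2)$ so produced that absorbs the outer factor $c(x^2)$. As a check against sign or constant-term slips in the $a_0$ term, I would verify the formula on the Fibonacci generating function $f(x)=\frac{x}{1-x-x^2}$, where it should reproduce the sequence $1,3,7,17,39,91,\dots$ of the earlier example.
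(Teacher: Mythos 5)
Your argument is exactly the paper's: the authors also justify the corollary by applying the Fundamental Theorem of Riordan arrays to $\left(\frac{1}{1+x^2},\frac{x}{1+x^2}\right)^{-1}=(c(x^2),xc(x^2))$ acting on $\frac{(1+x)f(x)-a_0}{x(1-x)}$, with the cancellation of $c(x^2)$ left implicit. Your more explicit substitution and cancellation are correct, so the proposal matches the paper's proof.
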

\begin{proof}
This follows from the fundamental theorem of Riordan arrays since
$$\left(\frac{1}{1+x^2},\frac{x}{1+x^2}\right)^{-1}=(c(x^2),xc(x^2)).$$
\end{proof}
\begin{example} The g.f. of the $\mathbb{B}$-transform of the Fibonacci numbers $F_n$ is given by
$$\frac{1+xc(x^2)}{x(1-xc(x^2))}\frac{xc(x^2)}{1-xc(x^2)-x^2c(x^2)^2}.$$
This follows since
$$F_n=[x^n]\frac{x}{1-x-x^2}$$ and $F_0=0$. This g.f. may be simplified to
$$\frac{1-4x^2+x\sqrt{1-4x^2}}{1-2x-5x^2+10x^3}=\frac{1-4x^2+x\sqrt{1-4x^2}}{(1-2x)(1-5x^2)}.$$
By solving the equation
$$u=\cfrac{1}{1-3x+\cfrac{x^2}{1+x-x^2c(x^2)}},$$ we see that this g.f. may be expressed (using  Eq. (\ref{ACat}))
as the continued fraction
$$\cfrac{1}{1-3x+
\cfrac{2x^2}{1+x-
\cfrac{x^2}{1-
\cfrac{x^2}{1-
\cfrac{x^2}{1-\ldots}}}}},$$
which shows that the the Hankel transform of the $\mathbb{B}$-transform of the Fibonacci numbers is $(-2)^n$.
\end{example}

We have defined the $\mathbb{B}$ matrix using the Riordan array $\left(\frac{1}{1+x^2},\frac{x}{1+x^2}\right)$. This matrix is associated with the Chebyshev polynomials of the second kind $U_n(x)$ (it is the coefficient array of $U_n(x/2)$). The matrix $\left(\frac{1-x^2}{1+x^2},\frac{x}{1+x^2}\right)$ is related to the Chebyshev polynomials of the first kind $T_n$. We have
\begin{proposition} We have
\begin{equation}\label{Cheb1} \mathbb{B}=\left(\frac{1-x^2}{1+x^2},\frac{x}{1+x^2}\right)^{-1}\cdot (1+x,x)\cdot (1+x,x)^t.\end{equation}
\end{proposition}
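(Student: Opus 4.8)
The plan is to reduce the claimed identity to a statement about the single matrix $L$, and then verify that statement by a direct Riordan-product computation. Since $\mathbb{B}=L\cdot(1+x,x)^t$ by definition, and $(1+x,x)^t$ appears as the rightmost factor on the right-hand side of (\ref{Cheb1}), it suffices to prove
$$L=\left(\frac{1-x^2}{1+x^2},\frac{x}{1+x^2}\right)^{-1}\cdot(1+x,x).$$

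First I would pass to inverses. Recalling that $L^{-1}=\left(\frac{1-x}{1+x^2},\frac{x}{1+x^2}\right)$ and that, by the formula $(g,f)^{-1}=(1/(g\circ\bar f),\bar f)$ applied with $g=1+x$ and $f=x$, we have $(1+x,x)^{-1}=\left(\frac{1}{1+x},x\right)$, the displayed identity is equivalent to
$$\left(\frac{1-x}{1+x^2},\frac{x}{1+x^2}\right)=\left(\frac{1}{1+x},x\right)\cdot\left(\frac{1-x^2}{1+x^2},\frac{x}{1+x^2}\right).$$

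Next I would evaluate the right-hand side using the Riordan group law $(g,f)\cdot(h,l)=(g\cdot(h\circ f),\,l\circ f)$. Here the second component of the left factor is $f(x)=x$, so composition with $f$ acts as the identity and the product collapses to $\left(\frac{1}{1+x}\cdot\frac{1-x^2}{1+x^2},\,\frac{x}{1+x^2}\right)$. Since $\frac{1-x^2}{1+x}=1-x$, the first component simplifies to $\frac{1-x}{1+x^2}$, which is exactly $L^{-1}$; taking inverses back gives (\ref{Cheb1}).

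The computation is essentially mechanical, so I would not expect a genuine obstacle; the only points requiring care are keeping the factors in the correct order (the Riordan group is non-abelian, so $(1+x,x)$ must remain on its designated side throughout the manipulation) and correctly reading off $(1+x,x)^{-1}$ before simplifying. One could instead verify (\ref{Cheb1}) column by column at the level of generating functions, but reducing to $L$ and inverting is cleaner and makes the role of the first-kind Chebyshev array $\left(\frac{1-x^2}{1+x^2},\frac{x}{1+x^2}\right)$ transparent.
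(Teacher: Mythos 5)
Your proposal is correct and follows essentially the same route as the paper: both arguments reduce the identity to $L\cdot(1+x,x)^{-1}=\left(\frac{1-x^2}{1+x^2},\frac{x}{1+x^2}\right)^{-1}$ and verify it by a one-line Riordan product using $\frac{(1+x)(1-x)}{1+x^2}=\frac{1-x^2}{1+x^2}$, the only cosmetic difference being that you pass to inverses and check the product against $L^{-1}$ while the paper inserts $(1+x,x)^{-1}(1+x,x)$ and absorbs the left factor into $L$. No gaps.
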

\begin{proof} We have
\begin{eqnarray*} \mathbb{B}&=&L\cdot (1+x,x)^t=L\cdot (1+x,x)^{-1}\cdot (1+x,x)\cdot(1+x,x)^t\\
&=&L\cdot \left(\frac{1}{1+x},x\right)\cdot (1+x,x)\cdot(1+x,x)^t\\
&=& \left(\frac{1-x}{1+x^2},\frac{x}{1+x^2}\right)\cdot\left(\frac{1}{1+x},x\right)\cdot (1+x,x)\cdot(1+x,x)^t\\
&=&\left(\frac{1-x^2}{1+x^2},\frac{x}{1+x^2}\right)^{-1}\cdot (1+x,x)\cdot (1+x,x)^t.\end{eqnarray*}
\end{proof}
\noindent We can decompose $(1+x,x)\cdot (1+x,x)^t$ as the sum of two matrices:
\begin{displaymath}\left(\begin{array}{ccccccc} 1 & 0 & 0
& 0 & 0 & 0 & \ldots \\1 & 1 & 0 & 0 & 0 & 0 & \ldots \\
0 & 1 & 1 & 0 & 0 & 0 & \ldots \\ 0 & 0 & 1 & 1
& 0 & 0 & \ldots \\ 0 & 0 & 0 & 1 & 1 & 0 & \ldots
\\0 & 0 & 0 & 0 & 1 & 1 &\ldots\\ \vdots & \vdots & \vdots & \vdots & \vdots
& \vdots & \ddots\end{array}\right)+\left(\begin{array}{ccccccc} 0 & 1 & 0
& 0 & 0 & 0 & \ldots \\0 & 1 & 1 & 0 & 0 & 0 & \ldots \\
0 & 0 & 1 & 1 & 0 & 0 & \ldots \\ 0 & 0 & 0 & 1
& 1 & 0 & \ldots \\ 0 & 0 & 0 & 0 & 1 & 1 & \ldots
\\0 & 0 & 0 & 0 & 0 & 1 &\ldots\\ \vdots & \vdots & \vdots & \vdots & \vdots
& \vdots & \ddots\end{array}\right), \end{displaymath} which is the sum of
$(1+x,x)$ and a shifted version of $(1+x,x)$.
To obtain $\mathbb{B}$ we multiply by $\left(\frac{1-x^2}{1+x^2},\frac{x}{1+x^2}\right)^{-1}$.
This gives us, once again
\begin{displaymath}\mathbb{B}=\left(\begin{array}{ccccccc} 1 & 0 & 0
& 0 & 0 & 0 & \ldots \\1 & 1 & 0 & 0 & 0 & 0 & \ldots \\
2 & 1 & 1 & 0 & 0 & 0 & \ldots \\ 3 & 3 & 1 & 1
& 0 & 0 & \ldots \\ 6 & 4 & 4 & 1 & 1 & 0 & \ldots
\\10 & 10 & 5 & 5 & 1 & 1 &\ldots\\ \vdots & \vdots & \vdots & \vdots & \vdots
& \vdots & \ddots\end{array}\right)+\left(\begin{array}{ccccccc} 0 & 1 & 0
& 0 & 0 & 0 & \ldots \\0 & 1 & 1 & 0 & 0 & 0 & \ldots \\
0 & 2 & 1 & 1 & 0 & 0 & \ldots \\ 0 & 3 & 3 & 1
& 1 & 0 & \ldots \\ 0 & 6 & 4 & 4 & 1 & 1 & \ldots
\\0 & 10 & 10 & 5 & 5 & 1 &\ldots\\ \vdots & \vdots & \vdots & \vdots & \vdots
& \vdots & \ddots\end{array}\right), \end{displaymath}

\noindent where the first member of the sum is the Riordan array
$$\left(\frac{1-x^2}{1+x^2},\frac{x}{1+x^2}\right)^{-1}\cdot (1+x,x)=\left(\frac{1-x^2}{(1+x)(1+x^2)},\frac{x}{1+x^2}\right)^{-1}=L.$$
\begin{theorem}
Let $b_n=\sum_{k=0}^{n+1} b_{n,k} a_k$ where $b_{n,k}$ is the $(n,k)$-th element of $\mathbb{B}$. Then
$$b_n=\sum_{k=0}^n \binom{n}{k}(a_{n-2k}+a_{n-2k+1}),$$ where we have extended $a_n$ to negative $n$ by setting
$a_{-n}=a_n$.
\end{theorem}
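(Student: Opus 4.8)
The plan is to start from the factorization $\mathbb{B}=L\cdot(1+x,x)^t$ together with the explicit entries $L_{n,k}=\binom{n}{\lfloor(n-k)/2\rfloor}$, both recorded above, and then to carry out an elementary reindexing. Since $(1+x,x)^t$ is the identity matrix plus the shift matrix $S$ with $S_{n,n+1}=1$, applying it to the column vector $(a_0,a_1,\dots)^t$ produces the vector with $k$-th entry $d_k:=a_k+a_{k+1}$. Hence, as $b_n$ is the $n$-th entry of $\mathbb{B}$ applied to $(a_k)$,
$$b_n=\bigl(L\cdot(1+x,x)^t(a_0,a_1,\dots)^t\bigr)_n=\sum_{k=0}^n\binom{n}{\lfloor(n-k)/2\rfloor}d_k.$$

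Next I would group this sum according to the value $j=\lfloor(n-k)/2\rfloor$. One has $\lfloor(n-k)/2\rfloor=j$ exactly when $k\in\{n-2j-1,\,n-2j\}$, so, keeping only those indices $k$ lying in $\{0,\dots,n\}$,
$$b_n=\sum_{j=0}^{\lfloor n/2\rfloor}\binom{n}{j}d_{n-2j}+\sum_{j=0}^{\lfloor(n-1)/2\rfloor}\binom{n}{j}d_{n-2j-1}.$$
Now extend $d$ to all of $\mathbb{Z}$ by $\hat d_m=d_m$ for $m\ge0$ and $\hat d_{-p}=d_{p-1}$ for $p\ge1$. In the second sum the substitution $j\mapsto n-j$ makes $j$ run over $\lfloor n/2\rfloor+1,\dots,n$ and replaces $\binom{n}{j}d_{n-2j-1}$ by $\binom{n}{j}d_{2j-n-1}=\binom{n}{j}\hat d_{n-2j}$ (since then $n-2j\le-1$), while the first sum already equals $\sum_{j=0}^{\lfloor n/2\rfloor}\binom{n}{j}\hat d_{n-2j}$; the two therefore merge into
$$b_n=\sum_{j=0}^n\binom{n}{j}\,\hat d_{\,n-2j}.$$

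Finally, with $a_n$ extended by $a_{-n}=a_n$, I would check that $\hat d_m=a_{|m|}+a_{|m+1|}$ for every $m\in\mathbb{Z}$: for $m\ge0$ this is just $d_m=a_m+a_{m+1}$, and for $m=-p<0$ it reads $d_{p-1}=a_{p-1}+a_p=a_{|-p|}+a_{|-p+1|}$ (using $|1-p|=p-1$ for $p\ge1$). Plugging this into the last display gives $b_n=\sum_{j=0}^n\binom{n}{j}(a_{n-2j}+a_{n-2j+1})$, which is the assertion.

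The step I expect to be fussiest is the middle one: one must verify that the ranges of $j$ in the two sums are exactly those for which $n-2j$, resp.\ $n-2j-1$, falls in $\{0,\dots,n\}$, and inspect the merge at the top index $j=\lfloor n/2\rfloor$ separately for $n$ even and $n$ odd. As an independent check one can argue by generating functions: the formula for the g.f.\ of the $\mathbb{B}$-transform obtained above gives $b_n$ the g.f.\ $\frac{(1+w)f(w)-a_0}{x(1-w)}$ with $w=xc(x^2)$, whereas expanding $\sum_n x^n\sum_k\binom{n}{k}\bigl(a_{|n-2k|}+a_{|n-2k+1|}\bigr)$ via $\sum_{j\ge0}\binom{2j+r}{j}z^j=c(z)^r/\sqrt{1-4z}$ and $\sum_{m\in\mathbb{Z}}a_{|m|}w^{|m|}=2f(w)-a_0$ produces the g.f.\ $\frac{(1+w)\bigl((1+w)f(w)-a_0\bigr)}{w\sqrt{1-4x^2}}$; these agree because $w^2=c(x^2)-1$ and $\sqrt{1-4x^2}=(2-c(x^2))/c(x^2)$ force $x(1-w^2)=w\sqrt{1-4x^2}$.
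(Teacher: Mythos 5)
Your proof is correct and follows essentially the same route as the paper's: both are direct reindexing arguments that combine the binomial symmetry $\binom{n}{j}=\binom{n}{n-j}$ with the even extension $a_{-n}=a_n$ to reconcile the two expressions. The only organizational difference is that you fold the matrix-product side directly into the target sum (grouping on $j=\lfloor(n-k)/2\rfloor$ and reflecting the out-of-range half), whereas the paper reduces both sides to the common intermediate expression $\sum_{k=0}^{\lfloor(n-1)/2\rfloor}\binom{n}{k}(a_{n-2k+1}+2a_{n-2k}+a_{n-2k-1})+[n\backslash 2=0]\binom{n}{\lfloor n/2\rfloor}(a_0+a_1)$; your version is, if anything, the more transparent of the two, and the generating-function cross-check is a nice bonus.
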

\begin{proof}

We have seen that $\mathbb{B}$ has general term
$$\binom{n}{\lfloor \frac{n-k}{2} \rfloor}+\binom{n}{\lfloor \frac{n-k+1}{2} \rfloor}-0^k\cdot \binom{n}{\lfloor \frac{n}{2} \rfloor}.$$
Thus the $\mathbb{B}$ transform of $a_n$ is given by
$$\sum_{k=0}^{n+1}\left(\binom{n}{\lfloor \frac{n-k}{2} \rfloor}+\binom{n}{\lfloor \frac{n-k+1}{2} \rfloor}-0^k\cdot \binom{n}{\lfloor \frac{n}{2} \rfloor}\right)a_k$$ which can also be written as
$$\sum_{k=0}^{n+1}\left(\binom{n}{\lfloor \frac{n-k}{2} \rfloor}(1-0^k)+\binom{n}{\lfloor \frac{n-k+1}{2} \rfloor} \right)a_k$$
since $0^k\cdot \binom{n}{\lfloor \frac{n}{2} \rfloor}=0^k\cdot \binom{n}{\lfloor \frac{n-k}{2} \rfloor}$.
We can also write this as
\begin{equation}\label{b_n}b_n=\sum_{k=0}^{n+1}\left(\binom{n}{\lfloor \frac{k-1}{2} \rfloor}(1-0^{n-k+1})+\binom{n}{\lfloor \frac{k}{2} \rfloor} \right)a_{n-k+1}.\end{equation}
Now note that
\begin{eqnarray*}\sum_{k=0}^n \binom{n}{k}(a_{n-2k}+a_{n-2k+1})&=&\sum_{k=0}^{\lfloor \frac{n}{2} \rfloor}\binom{n}{k}(a_{n-2k}+a_{n-2k+1})+\sum_{k=\lfloor \frac{n}{2} \rfloor+1}^n\binom{n}{k}(a_{n-2k}+a_{n-2k+1})\\
&=&\sum_{k=0}^{\lfloor \frac{n}{2} \rfloor}\binom{n}{k}(a_{n-2k}+a_{n-2k+1})+\sum_{k=\lfloor \frac{n}{2} \rfloor+1}^n\binom{n}{n-k}(a_{2k-n}+a_{2k-n-1}).\end{eqnarray*}
 By gathering similar terms in the above expression, and considering the cases of $n$ even ($n \backslash
 2=0$) and $n$ odd, we arrive at
\begin{equation}\sum_{k=0}^n \binom{n}{k}(a_{n-2k}+a_{n-2k+1})=\sum_{k=0}^{\lfloor \frac{n-1}{2} \rfloor}\binom{n}{k}(a_{n-2k+1}+2 a_{n-2k}+a_{n-2k-1})+[n \backslash 2=0]\binom{n}{\lfloor \frac{n}{2} \rfloor}(a_0+a_1).\end{equation}
By considering the separate sums for $k$ even and $k$ odd in Eq. (\ref{b_n}), extending to negative $n$ and gathering terms we find that also
$$b_n=\sum_{k=0}^{\lfloor \frac{n-1}{2} \rfloor}\binom{n}{k}(a_{n-2k+1}+2 a_{n-2k}+a_{n-2k-1})+[n \backslash 2=0]\binom{n}{\lfloor \frac{n}{2} \rfloor}(a_0+a_1).$$

\end{proof}
\noindent Thus we have the following equivalent expressions:
\begin{eqnarray*}
b_n&=&\sum_{k=0}^{n+1} b_{n,k} a_k\\
&=&\sum_{k=0}^n \binom{n}{k}(a_{n-2k}+a_{n-2k+1})\\
&=&\sum_{k=0}^{\lfloor \frac{n-1}{2} \rfloor}\binom{n}{k}(a_{n-2k+1}+2 a_{n-2k}+a_{n-2k-1})+[n \backslash 2=0]\binom{n}{\lfloor \frac{n}{2} \rfloor}(a_0+a_1)\\
&=&\sum_{k=0}^n \binom{n+1}{\frac{n-k}{2}}\frac{k+1}{n+1}\frac{1+(-1)^{n-k}}{2}(\sum_{j=0}^k a_j+ \sum_{j=1}^{k+1} a_j).\end{eqnarray*}
\section{Symmetric Toeplitz plus Hankel matrices}
We now recall result Proposition 2.1 from \cite{Basor}, which we state in the language used above.
\begin{proposition} \cite[Proposition 2.1]{Basor}. Let $(a_n)_{n=-\infty}^n$ be a sequence with $a_n=a_{-n}$ and let
$$b_n=\sum_{k=0}^n \binom{n}{k}(a_{n-2k}+a_{n-2k+1}).$$ Also let $H=(b_{i+j})_{i,j \ge 0}$ be the Hankel matrix of $(b_n)_{n\ge 0}$ and
$A=(a_{i-j}+a_{i+j+1})_{i,j\ge 0}$ be the Toeplitz plus Hankel matrix associated to $(a_n)_{n=-\infty}^n$. Finally let $L$ be the matrix with $(n,k)-$th term $\binom{n}{\lfloor \frac{n-k}{2} \rfloor}$.
Then
\begin{equation} H = L\cdot A \cdot L^t.\end{equation}
\end{proposition}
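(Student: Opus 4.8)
The plan is to convert the matrix identity $H=L\cdot A\cdot L^{t}$ into a statement about the linear functional carried by the sequence $b_{n}$, and then to verify that statement by evaluating the functional on products of the generalized Chebyshev polynomials $P_{n}$.

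Since $L$ is invertible with $L^{-1}=\left(\frac{1-x}{1+x^{2}},\frac{x}{1+x^{2}}\right)$ the coefficient array of the $P_{n}$, so that $[x^{k}]P_{n}$ is the $(n,k)$ entry of $L^{-1}$, and since $(L^{t})^{-1}=(L^{-1})^{t}$, the assertion is equivalent to $A=L^{-1}\cdot H\cdot(L^{-1})^{t}$. Expanding the $(i,j)$ entry of the right-hand side and using $H_{p,q}=b_{p+q}$ gives
\[
\big(L^{-1}\cdot H\cdot(L^{-1})^{t}\big)_{i,j}=\sum_{p,q}\big([x^{p}]P_{i}\big)\big([x^{q}]P_{j}\big)\,b_{p+q}=\mathcal{L}\big[P_{i}(x)P_{j}(x)\big],
\]
where $\mathcal{L}$ is the linear functional on polynomials with $\mathcal{L}[x^{n}]=b_{n}$. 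So it suffices to prove
\[
\mathcal{L}\big[P_{i}(x)P_{j}(x)\big]=a_{i-j}+a_{i+j+1}\qquad(i,j\ge 0),
\]
with the convention $a_{-n}=a_{n}$ in force.

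To get hold of $\mathcal{L}$, set $\hat a(z)=\sum_{m\in\mathbb{Z}}a_{m}z^{m}$, a bilateral series with $[z^{m}]\hat a(z)=a_{m}=a_{-m}$. Pushing the binomial sum in $b_{n}=\sum_{k=0}^{n}\binom{n}{k}(a_{n-2k}+a_{n-2k+1})$ through the coefficient extraction gives $b_{n}=[z^{n}]\big((1+z^{2})^{n}(1+z^{-1})\hat a(z)\big)=\mathrm{CT}_{z}\big((z+z^{-1})^{n}(1+z^{-1})\hat a(z)\big)$, where $\mathrm{CT}_{z}$ denotes the coefficient of $z^{0}$. By linearity, for every polynomial $p$,
\[
\mathcal{L}[p(x)]=\mathrm{CT}_{z}\big((1+z^{-1})\,p(z+z^{-1})\,\hat a(z)\big).
\]
(If $\hat a$ converges on $|z|=1$ this is $\tfrac{1}{\pi}\int_{0}^{\pi}\hat a(e^{i\theta})(1+\cos\theta)\,p(2\cos\theta)\,d\theta$, the analytic picture -- integration against the symbol on the circle -- underlying the Basor--Ehrhardt argument; the formal identity is all we need.)

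Finally, a one-line induction on $P_{n}=xP_{n-1}-P_{n-2}$, $P_{0}=1$, $P_{1}=x-1$ shows $P_{n}(z+z^{-1})=\dfrac{z^{n+1}+z^{-n}}{z+1}=\sum_{s=-n}^{n}(-1)^{s+n}z^{s}$, the last equality being the division of $z^{2n+1}+1$ by $z+1$. Because $1+z^{-1}=\frac{z+1}{z}$, the factor $z+1$ cancels one denominator and
\[
(1+z^{-1})\,P_{i}(z+z^{-1})\,P_{j}(z+z^{-1})=(z^{i}+z^{-i-1})\sum_{s=-j}^{j}(-1)^{s+j}z^{s}.
\]
Substituting this into the formula for $\mathcal{L}$ and using $\mathrm{CT}_{z}(\hat a(z)z^{t})=a_{-t}=a_{t}$ yields $\mathcal{L}[P_{i}P_{j}]=\sum_{s=-j}^{j}(-1)^{s+j}(a_{s+i}+a_{s-i-1})$; re-indexing the two sums and using $a_{-n}=a_{n}$ to align their ranges rewrites this as $(-1)^{i+j}$ times the difference of two alternating sums $\sum(-1)^{u}a_{u}$ over integer intervals differing by a single unit shift, so the overlapping terms cancel and precisely $a_{i-j}+a_{i+j+1}$ survives. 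The conceptual crux is the closed form $P_{n}(z+z^{-1})=(z^{n+1}+z^{-n})/(z+1)$, which makes the weight $1+z^{-1}$ cancel cleanly; the only point that actually requires care is this final sign-and-telescoping bookkeeping, where the hypothesis $a_{-n}=a_{n}$ enters essentially. Reading the chain of equalities backwards then gives $H=L\cdot A\cdot L^{t}$.
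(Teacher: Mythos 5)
Your argument is correct, but there is nothing in the paper to compare it against: the authors state this proposition without proof, quoting it (translated into their notation) from Basor and Ehrhardt, so you have supplied a proof where the paper supplies only a citation. Your route is a self-contained formal version of the Basor--Ehrhardt symbol calculus. The reduction of $H=L\cdot A\cdot L^{t}$ to $\mathcal{L}[P_iP_j]=a_{i-j}+a_{i+j+1}$ is legitimate: $L$ is unit lower triangular, so $A=L^{-1}H(L^{-1})^{t}$ involves only finite sums, and the identification of the rows of $L^{-1}$ with the coefficients of the polynomials $P_n$ is asserted in the paper just before the $\mathbb{B}$-transform is introduced. The constant-term representation $b_n=\mathrm{CT}_z\bigl((z+z^{-1})^n(1+z^{-1})\hat a(z)\bigr)$ is a faithful repackaging of the defining binomial sum, since $(z+z^{-1})^n=\sum_k\binom{n}{k}z^{n-2k}$ and $\mathrm{CT}_z(z^{t}\hat a(z))=a_{-t}=a_t$; multiplying a bilateral series by a Laurent polynomial and extracting the constant term is well defined, so no convergence issues arise. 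The closed form $P_n(z+z^{-1})=(z^{n+1}+z^{-n})/(z+1)=\sum_{s=-n}^{n}(-1)^{s+n}z^{s}$ checks out by the induction you indicate, and I confirm that the final bookkeeping, which you describe only in outline, does close: one gets $S_1=(-1)^{i+j}\sum_{u=i-j}^{i+j}(-1)^{u}a_u$ and, after the reflection $v\mapsto -v$ permitted by $a_{-n}=a_n$, $S_2=-(-1)^{i+j}\sum_{u=i-j+1}^{i+j+1}(-1)^{u}a_u$, so the difference of the two alternating sums leaves $(-1)^{i+j}\bigl((-1)^{i-j}a_{i-j}-(-1)^{i+j+1}a_{i+j+1}\bigr)=a_{i-j}+a_{i+j+1}$. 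What your approach buys, compared with simply citing the source, is a proof entirely in the spirit of the rest of the paper (Riordan arrays, row polynomials, linear functionals and moments); it also makes transparent why the matrix $\mathcal{L}$ appearing later in the paper's $LDU$ examples is the inverse coefficient array of the orthogonal polynomials for the moment functional $\mathcal{L}[x^n]=b_n$, and why the bilinear form attached to $b_n$ in the $P_n$ basis is exactly the Toeplitz-plus-Hankel matrix $A$. The parenthetical remark identifying the functional with integration of $p(2\cos\theta)(1+\cos\theta)$ against $\hat a(e^{i\theta})$ is an accurate description of the analytic picture in the original reference, though, as you say, it is not needed for the formal identity.
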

\noindent An immediate consequence of this is that
$$ A = L^{-1} H (L^t)^{-1} = L^{-1} H (L^{-1})^t.$$

\noindent If now $H$ has an $LDU$ decomposition $H=\mathcal{L}\cdot D\cdot \mathcal{L}^t$ then we obtain an $LDU$ decomposition
for the symmetric Toeplitz plus Hankel matrix $A$:
$$ A = L^{-1} \cdot \mathcal{L}\cdot D\cdot \mathcal{L}^t \cdot (L^{-1})^t,$$
or
\begin{equation} A = (L^{-1}\mathcal{L})\cdot D \cdot (L^{-1}\mathcal{L})^t.\end{equation}
\begin{example} We continue our example with the Fibonacci numbers. Thus let
$$b_n = \sum_{k=0}^{n+1} b_{n,k} F_k = [x^n]\frac{1-4x^2+x\sqrt{1-4x^2}}{(1-2x)(1-5x^2)}.$$ For this sequence, we have the
following $LDU$ decomposition of $H=(b_{i+j})_{i,j \ge 0}$.
\begin{displaymath}
\left(\begin{array}{ccccccc} 1 & 3 & 7
& 17 & 39 & 91 & \ldots \\3 & 7 & 17 & 39 & 91 & 207 & \ldots \\
7 & 17 & 39 & 91 & 207 & 475 & \ldots \\ 17 & 39 & 91 & 207
& 475 & 1075 & \ldots \\ 39 & 91 & 207 & 475 & 1075 & 2445 & \ldots
\\91 & 207 & 475 & 1075 & 2445 & 5515 &\ldots\\ \vdots & \vdots & \vdots & \vdots & \vdots
& \vdots & \ddots\end{array}\right)=\end{displaymath}
\begin{displaymath}\left(\begin{array}{ccccccc} 1 & 0 & 0
& 0 & 0 & 0 & \ldots \\3 & 1 & 0 & 0 & 0 & 0 & \ldots \\
7 & 2 & 1 & 0 & 0 & 0 & \ldots \\ 12 & 6 & 2 & 1
& 0 & 0 & \ldots \\ 39 & 13 & 7 & 2 & 1 & 0 & \ldots
\\91 & 33 & 15 & 8 & 2 & 1 &\ldots\\ \vdots & \vdots & \vdots & \vdots & \vdots
& \vdots & \ddots\end{array}\right)\left(\begin{array}{ccccccc} 1 & 0 & 0
& 0 & 0 & 0 & \ldots \\0 & -2 & 0 & 0 & 0 & 0 & \ldots \\
0 & 0 & -2 & 0 & 0 & 0 & \ldots \\ 0 & 0 & 0 & -2
& 0 & 0 & \ldots \\ 0 & 0 & 0 & 0 & -2 & 0 & \ldots
\\0 & 0 & 0 & 0 & 0 & -2 &\ldots\\ \vdots & \vdots & \vdots & \vdots & \vdots
& \vdots & \ddots\end{array}\right)\left(\begin{array}{ccccccc} 1 & 0 & 0
& 0 & 0 & 0 & \ldots \\3 & 1 & 0 & 0 & 0 & 0 & \ldots \\
7 & 2 & 1 & 0 & 0 & 0 & \ldots \\ 12 & 6 & 2 & 1
& 0 & 0 & \ldots \\ 39 & 13 & 7 & 2 & 1 & 0 & \ldots
\\91 & 33 & 15 & 8 & 2 & 1 &\ldots\\ \vdots & \vdots & \vdots & \vdots & \vdots
& \vdots & \ddots\end{array}\right)^t\end{displaymath}
Here, the first matrix $\mathcal{L}$ of the product is the inverse of the coefficient array of the orthogonal polynomials for which the sequence $b_n$ is the moment sequence.  These polynomials are specified by
$$P_n(x)=xP_{n-1}(x)-P_{n-2}(x), \quad P_0(x)=1,P_1(x)=x-3,P_2(x)=x^2-2x-1.$$
We have
$$L^{-1}\mathcal{L}=\left(\begin{array}{ccccccc} 1 & 0 & 0
& 0 & 0 & 0 & \ldots \\2 & 1 & 0 & 0 & 0 & 0 & \ldots \\
3 & 1 & 1 & 0 & 0 & 0 & \ldots \\ 5 & 2 & 1 & 1
& 0 & 0 & \ldots \\ 8 & 3 & 2 & 1 & 1 & 0 & \ldots
\\13 & 5 & 3 & 2 & 1 & 1 &\ldots\\ \vdots & \vdots & \vdots & \vdots & \vdots
& \vdots & \ddots\end{array}\right),$$ and thus

\begin{displaymath}
A=\left(\begin{array}{ccccccc} 1 & 2 & 3
& 5 & 8 & 13 & \ldots \\2 & 2 & 4 & 6 & 10 & 15 & \ldots \\
3 & 4 & 5 & 9 & 14 & 23 & \ldots \\ 5 & 6 & 9 & 13
& 22 & 35 & \ldots \\ 8 & 10 & 14 & 22 & 34 & 56 & \ldots
\\13 & 16 & 23 & 35 & 56 & 89 &\ldots\\ \vdots & \vdots & \vdots & \vdots & \vdots
& \vdots & \ddots\end{array}\right)=\end{displaymath}
$$\left(\begin{array}{ccccccc} 1 & 0 & 0
& 0 & 0 & 0 & \ldots \\2 & 1 & 0 & 0 & 0 & 0 & \ldots \\
3 & 1 & 1 & 0 & 0 & 0 & \ldots \\ 5 & 2 & 1 & 1
& 0 & 0 & \ldots \\ 8 & 3 & 2 & 1 & 1 & 0 & \ldots
\\13 & 5 & 3 & 2 & 1 & 1 &\ldots\\ \vdots & \vdots & \vdots & \vdots & \vdots
& \vdots & \ddots\end{array}\right) \left(\begin{array}{ccccccc} 1 & 0 & 0
& 0 & 0 & 0 & \ldots \\0 & -2 & 0 & 0 & 0 & 0 & \ldots \\
0 & 0 & -2 & 0 & 0 & 0 & \ldots \\ 0 & 0 & 0 & -2
& 0 & 0 & \ldots \\ 0 & 0 & 0 & 0 & -2 & 0 & \ldots
\\0 & 0 & 0 & 0 & 0 & -2 &\ldots\\ \vdots & \vdots & \vdots & \vdots & \vdots
& \vdots & \ddots\end{array}\right)
\left(\begin{array}{ccccccc} 1 & 0 & 0
& 0 & 0 & 0 & \ldots \\2 & 1 & 0 & 0 & 0 & 0 & \ldots \\
3 & 1 & 1 & 0 & 0 & 0 & \ldots \\ 5 & 2 & 1 & 1
& 0 & 0 & \ldots \\ 8 & 3 & 2 & 1 & 1 & 0 & \ldots
\\13 & 5 & 3 & 2 & 1 & 1 &\ldots\\ \vdots & \vdots & \vdots & \vdots & \vdots
& \vdots & \ddots\end{array}\right)^t.$$
We note that the matrix $L^{-1}\mathcal{L}$ in this case is ``almost'' a Riordan array, in that it is
the Fibonacci ``sequence-array'' $\left(\frac{1}{1-x-x^2},x\right)$ with general term
$[k \le n]F_{n-k+1}$, shifted once with a first column of $F_{n+2}$ pre-pended.
\end{example}
\begin{example} We take the example of the Jacobsthal numbers
$$J_n=\frac{2^n}{3}-\frac{(-1)^n}{3}=[x^n]\frac{x}{1-x-2x^2}.$$ We note that this is the the element
corresponding to $r=2$ of the family of sequences with $n$-th term given by
$$[x^n]\frac{x}{1-x-rx^2}=\sum_{k=0}^{\lfloor \frac{n-1}{2} \rfloor} \binom{n-k-1}{k}r^k, $$ where the
Fibonacci numbers correspond to $r=1$.
Thus we let
$$b_n = \sum_{k=0}^{n+1} b_{n,k} J_k.$$
Then the g.f. for $b_n$ is given by
$$(c(x), xc(x^2))\cdot  \left(\frac{(1+x)\left(\frac{x}{1-x-2x^2}\right)-0}{x(1-x)}\right)=\frac{\sqrt{1-4x^2}+3(1-2x)}{2(2-9x+10x^2)}.$$
This is equivalent to the expansion
$$\cfrac{1}{1-3x+
\cfrac{x^2}{1-
\cfrac{x^2}{1-
\cfrac{x^2}{1-\cdots}}}},$$ from which we deduce that the Hankel transform of the
$\mathbb{B}$-transform of the Jacobsthal numbers is $(-1)^n$.
Using Eq. (\ref{Equiv}), we can also write the g.f. of $b_n$ as
$$\left(\frac{1-x}{1+x^2},\frac{x}{1+x^2}\right)^{-1}\cdot \left(\frac{(1+x)\frac{x}{(1+x)(1-2x)}}{x}\right)=\left(\frac{1-x}{1+x^2},\frac{x}{1+x^2}\right)^{-1}\cdot \frac{1}{1-2x},$$ and
hence we have
$$\sum_{k=0}^{n+1} b_{n,k}J_k=\sum_{k=0}^n \binom{n}{\lfloor \frac{n-k}{2} \rfloor}2^k.$$ The Hankel matrix $H$ for $b_n$ has $LDU$ decomposition $\mathcal{L} D \mathcal{L}^t$ as follows:
\begin{displaymath}
H=\left(\begin{array}{ccccccc} 1 & 3 & 8
& 21 & 54 & 138 & \ldots \\3 & 8 & 21 & 54 & 138 & 350 & \ldots \\
8 & 21 & 54 & 138 & 350 & 885 & \ldots \\ 21 &54 & 138 & 350
& 885 & 2230 & \ldots \\ 54 & 138 & 350 & 885 & 2230 & 5610 & \ldots
\\138 & 350 & 885 & 2230 & 5610 & 14088 &\ldots\\ \vdots & \vdots & \vdots & \vdots & \vdots
& \vdots & \ddots\end{array}\right)=\end{displaymath}
$$\left(\begin{array}{ccccccc} 1 & 0 & 0
& 0 & 0 & 0 & \ldots \\3 & 1 & 0 & 0 & 0 & 0 & \ldots \\
8 & 3 & 1 & 0 & 0 & 0 & \ldots \\ 21 & 9 & 3 & 1
& 0 & 0 & \ldots \\ 54 & 24 & 10 & 3 & 1 & 0 & \ldots
\\138 & 64 & 27 & 3 & 1 & 1 &\ldots\\ \vdots & \vdots & \vdots & \vdots & \vdots
& \vdots & \ddots\end{array}\right) \left(\begin{array}{ccccccc} 1 & 0 & 0
& 0 & 0 & 0 & \ldots \\0 & -1 & 0 & 0 & 0 & 0 & \ldots \\
0 & 0 & -1 & 0 & 0 & 0 & \ldots \\ 0 & 0 & 0 & -1
& 0 & 0 & \ldots \\ 0 & 0 & 0 & 0 & -1 & 0 & \ldots
\\0 & 0 & 0 & 0 & 0 & -1 &\ldots\\ \vdots & \vdots & \vdots & \vdots & \vdots
& \vdots & \ddots\end{array}\right)
\left(\begin{array}{ccccccc} 1 & 0 & 0
& 0 & 0 & 0 & \ldots \\3 & 1 & 0 & 0 & 0 & 0 & \ldots \\
8 & 3 & 1 & 0 & 0 & 0 & \ldots \\ 21 & 9 & 3 & 1
& 0 & 0 & \ldots \\ 54 & 24 & 10 & 3 & 1 & 0 & \ldots
\\138 & 64 & 27 & 3 & 1 & 1 &\ldots\\ \vdots & \vdots & \vdots & \vdots & \vdots
& \vdots & \ddots\end{array}\right)^t.$$
In this case, the matrix $\mathcal{L}$ is a Riordan array, equal to
$$\mathcal{L}=\left(\frac{\sqrt{1-4x^2}+3(1-2x)}{2(2-9x+10x^2)},xc(x^2)\right)=\left(\frac{1-3x+2x^2}{1+x^2},\frac{x}{1+x^2}\right)^{-1}.$$
Here, $\left(\frac{1-3x+2x^2}{1+x^2},\frac{x}{1+x^2}\right)$ is the coefficient array of the family of orthogonal polynomials given by $$P_n(x)=xP_{n-1}-P_{n-2}(x), \quad P_0(x)=1,P_1(x)=x-3,P_2(x)=x^2-3x+1.$$ The $\mathbb{B}$-transform of the Jacobsthal numbers $J_n$ is thus the moment sequence for this family of orthogonal polynomials. Finally, we have
\begin{eqnarray*}L^{-1}\mathcal{L}&=&\left(\frac{1-x}{1+x^2},\frac{x}{1+x^2}\right)\cdot \left(\frac{\sqrt{1-4x^2}+3(1-2x)}{2(2-9x+10x^2)},xc(x^2)\right)\\
&=&\left(\frac{1-x}{1+x^2},\frac{x}{1+x^2}\right)\cdot\left(\frac{1-3x+2x^2}{1+x^2},\frac{x}{1+x^2}\right)\\
&=&\left(\frac{1}{1-2x},x\right).\end{eqnarray*} Thus the Toeplitz plus Hankel matrix $A$ associated to the Jacobsthal numbers $J_n$ has $LDU$ decomposition
$$A=\left(\begin{array}{ccccccc} 1 & 2 & 4
& 8 & 16 & 32 & \ldots \\2 & 3 & 6 & 12 & 24 & 48 & \ldots \\
4 & 6 & 11 & 22 & 44 & 88 & \ldots \\ 8 & 12 & 22 & 43
& 86 & 172 & \ldots \\ 16 & 24 & 44 & 86 & 171 & 342 & \ldots
\\32 & 48 & 88 & 172 & 342 & 683 &\ldots\\ \vdots & \vdots & \vdots & \vdots & \vdots
& \vdots & \ddots\end{array}\right)=$$
$$\left(\begin{array}{ccccccc} 1 & 0 & 0
& 0 & 0 & 0 & \ldots \\2 & 1 & 0 & 0 & 0 & 0 & \ldots \\
4 & 2 & 1 & 0 & 0 & 0 & \ldots \\ 8 & 4 & 2 & 1
& 0 & 0 & \ldots \\ 16 & 8 & 4 & 2 & 1 & 0 & \ldots
\\32 & 16 & 8 & 4 & 2 & 1 &\ldots\\ \vdots & \vdots & \vdots & \vdots & \vdots
& \vdots & \ddots\end{array}\right) \left(\begin{array}{ccccccc} 1 & 0 & 0
& 0 & 0 & 0 & \ldots \\0 & -1 & 0 & 0 & 0 & 0 & \ldots \\
0 & 0 & -1 & 0 & 0 & 0 & \ldots \\ 0 & 0 & 0 & -1
& 0 & 0 & \ldots \\ 0 & 0 & 0 & 0 & -1 & 0 & \ldots
\\0 & 0 & 0 & 0 & 0 & -1 &\ldots\\ \vdots & \vdots & \vdots & \vdots & \vdots
& \vdots & \ddots\end{array}\right)
\left(\begin{array}{ccccccc} 1 & 0 & 0
& 0 & 0 & 0 & \ldots \\2 & 1 & 0 & 0 & 0 & 0 & \ldots \\
4 & 2 & 1 & 0 & 0 & 0 & \ldots \\ 8 & 4 & 2 & 1
& 0 & 0 & \ldots \\ 16 & 8 & 4 & 2 & 1 & 0 & \ldots
\\32 & 16 & 8 & 4 & 2 & 1 &\ldots\\ \vdots & \vdots & \vdots & \vdots & \vdots
& \vdots & \ddots\end{array}\right)^t.$$

\end{example}

\end{document}